

\documentclass[10pt]{amsart}
\usepackage{enumerate,amsmath,amssymb,latexsym,
amsfonts, amsthm, amscd, MnSymbol}


\setlength{\textwidth}{14.5cm}
\setlength{\textheight}{22cm}

\hoffset=-2.0cm
\voffset=-.8cm


\theoremstyle{plain}

\newtheorem{theorem}{Theorem}

\newtheorem*{theorem*}{Theorem}

\numberwithin{equation}{section}

\newcommand{\R}{\mathbb{R}}

\newcommand{\rd}{{\rm d}}

\newcommand{\ii}{{\rm i}}
\newcommand{\dd}{{\rm dn}_2}
\newcommand{\ddd}{{\rm dn}_3}

\newcommand{\dr}{\rm d}


\begin{document}

\title {Modular transformations and \\ the elliptic functions of Shen}

\date{}

\author[P.L. Robinson]{P.L. Robinson}

\address{Department of Mathematics \\ University of Florida \\ Gainesville FL 32611  USA }

\email[]{paulr@ufl.edu}

\subjclass{} \keywords{}

\begin{abstract}
We employ Weierstrassian modular transformations to compute fundamental periods for the elliptic functions $\dd$ and $\ddd$ of Shen. 
\end{abstract}

\maketitle

\section*{An introduction} 

\medbreak 

Ramanujan's theories of elliptic functions to alternative bases were provided with specific elliptic functions by Li-Chien Shen: an elliptic function $\ddd$ [2004] in signature three and an elliptic function $\dd$ [2014] in signature four. The definition of each of these functions involves inverting an incomplete hypergeometric integral on the real line; in each case, the resulting function is seen to satisfy a differential equation whose solutions are known to be elliptic. 

\medbreak 

When an elliptic function arises as a solution to a differential equation, its periods are often expressed as integrals. Archetypically, when a Weierstrass $\wp$ function appears as a solution to 
$$(f')^2 = 4 f^3 - g_2 f - g_3$$
with $g_2$ and $g_3$ real, its real fundamental half-period has the form 
$$\int_{e_1}^{\infty} (4 t^3 - g_2 t - g_3)^{- \tfrac{1}{2}} \; {\rm d} t$$
where $e_1$ is the largest zero of the cubic $4 t^3 - g_2 t - g_3$; its imaginary fundamental half-period has a similar integral expression. 

\medbreak 

Because $\ddd$ and $\dd$ are recognized as solutions to differential equations, their fundamental half-periods may be expressed in the way just described; recasting them hypergeometrically often calls for tricky and seemingly {\it ad hoc} manipulations. Our purpose here is to show that the half-periods of $\ddd$ and $\dd$ may be expressed in explicit hypergeometric terms quite simply and indeed naturally. 

\medbreak 

On the one hand, the hypergeometric origins of $\ddd$ and $\dd$ give immediate explicit form to their real fundamental half-periods, on account of the standard integral identity 
$$\int_0^{\frac{1}{2} \pi} F(a, b; \tfrac{1}{2}; \kappa^2 \sin^2 t) \; {\rm d} t = \tfrac{1}{2} \pi \, F(a, b; 1; \kappa^2).$$
On the other hand, their imaginary fundamental half-periods may also be given explicit hypergeometric form without the need for further integration: we show that they may be derived from the real fundamental half-periods by the use of Weierstrassian modular transformations that are associated to trimidiation and dimidiation. 

\medbreak

\section*{Two modular transformations} 

\medbreak 

We prepare our analysis of the elliptic functions $\dd$ and $\ddd$ by assembling certain facts regarding modular transformations as they pertain to Weierstrass $\wp$ functions. Thus, let $p$ be a Weierstrass $\wp$ function: specifically, let it be the Weierstrass function with invariants $g_2$ and $g_3$; as an alternative description, let it be the Weierstrass function having $(2 \omega, 2 \omega ')$ as a fundamental pair of periods. We may name $p$ by its invariants or by its half-periods, writing 
$$p = \wp (\bullet ; g_2, g_3) = \wp (\bullet ; \omega, \omega ').$$  

\medbreak 

The Weierstrass function 
$$q = \wp (\bullet ; \omega, \tfrac{1}{n} \omega ') = \wp (\bullet ; h_2, h_3)$$ 
obtained from $p$ upon division of a period by the positive integer $n$ is said to arise from $p$ via a modular transformation. We are especially interested in the effect of such a modular transformation on the invariants of a Weierstrass function: that is, we wish to determine the invariants $h_2$ and $h_3$ of $q$ in terms of the invariants $g_2$ and $g_3$ of $p$; in fact, we shall only require this information in the cases $n = 2$ and $n = 3$. In each case we merely state the results, referring to [1973] and [1989] for proofs.  

\medbreak 

The effect of a quadratic transformation is as follows. Here, $q = \wp (\bullet ; \omega, \tfrac{1}{2} \omega ').$ 

\medbreak 

\begin{theorem} \label{n=2}
If $n = 2$ and $b = p(\omega ')$ then 
$$h_2 = 60 b^2 - 4 g_2$$
and 
$$h_3 = 56 b^3 + 8 g_3.$$ 
\end{theorem}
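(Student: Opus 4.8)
The plan is to realize $q$ through a degree-two isogeny and then read off its invariants from a Laurent expansion at the origin. Write $L=\mathbb{Z}\,2\omega+\mathbb{Z}\,2\omega'$ for the period lattice of $p$ and $L'=\mathbb{Z}\,2\omega+\mathbb{Z}\,\omega'$ for that of $q$, so that $L\subset L'$ with index two and $L'=L\sqcup(L+\omega')$. Since $\omega'$ is a half-period of $p$, the function $g(z):=p(z)+p(z+\omega')$ is manifestly $2\omega$-periodic and, because $p$ has period $2\omega'$, is also $\omega'$-periodic; hence $g$ is elliptic with respect to $L'$.

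Next I would locate the poles of $g$. The summand $p(z)$ contributes double poles on $L$ and $p(z+\omega')$ contributes double poles on $L+\omega'$, so together $g$ has exactly one double pole per point of $L'$, that is, a single double pole modulo $L'$. Comparing principal parts with $q=\wp(\bullet;L')$, and using that $p$ is even about the half-period with $p'(\omega')=0$ (so that $p(z+\omega')=b+O(z^2)$ near $0$, where $b=p(\omega')$), I would conclude that $g-q$ is a holomorphic elliptic function, hence constant, and that the constant equals $b$. This yields the Landen-type identity
\[ q(z)=p(z)+p(z+\omega')-b, \]
which is the conceptual heart of the argument.

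With this identity in hand the invariants follow by matching the expansions $\wp(\bullet;g_2,g_3)=z^{-2}+\tfrac{1}{20}g_2 z^2+\tfrac{1}{28}g_3 z^4+\cdots$ and, likewise, $q=z^{-2}+\tfrac{1}{20}h_2 z^2+\tfrac{1}{28}h_3 z^4+\cdots$. I would Taylor-expand $p(z+\omega')-b$ to order $z^4$; since the odd derivatives of $p$ vanish at the half-period $\omega'$, only $p''(\omega')$ and $p''''(\omega')$ enter, and these I would compute from the differential equation via $p''=6p^2-\tfrac12 g_2$ and $p''''=120p^3-18g_2 p-12g_3$. Adding the resulting series to that of $p(z)$ and equating coefficients gives $h_2=60b^2-4g_2$ directly, together with the preliminary form $h_3=140b^3-21g_2 b-13g_3$.

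Finally I would invoke the one relation that closes the computation: because $b=p(\omega')$ is a zero of the cubic $4t^3-g_2 t-g_3$, we have $g_2 b=4b^3-g_3$, and substituting this into the preliminary form collapses it to $h_3=56b^3+8g_3$, as claimed. I expect the main obstacle to be the clean justification of the isogeny identity displayed above — in particular, pinning down the additive constant $b$ from the Laurent data and verifying that the pole structure is exactly a single double pole on $\mathbb{C}/L'$ — rather than the derivative bookkeeping, which is routine once the differential equation has been differentiated twice.
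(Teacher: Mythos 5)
Your proposal is correct and rests on exactly the identity $q(z) = p(z) + p(z+\omega') - p(\omega')$ that the paper's proof cites as its starting point, so the approach is essentially the same; you have simply carried out in full the derivation (justification of the identity via pole and periodicity considerations, then Laurent expansion using $p'' = 6p^2 - \tfrac{1}{2}g_2$ and the relation $4b^3 - g_2 b - g_3 = 0$) that the paper defers to [1973] Section 65 and [1989] Section 9.8. All of your intermediate coefficients, including the preliminary form $h_3 = 140b^3 - 21 g_2 b - 13 g_3$ and its collapse to $56b^3 + 8g_3$, check out.
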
 

\begin{proof} 
This proceeds from an inspection of the identity 
$$q(z) = p(z) + p(z + \omega ') - p(\omega ').$$
Details of the derivation may be found in [1973] Section 65 and in [1989] Section 9.8. 
\end{proof} 

\medbreak 

The effect of a cubic transformation is as follows. Here, $q = \wp (\bullet ; \omega, \tfrac{1}{3} \omega ').$

\medbreak 

\begin{theorem} \label{n=3}
If $n = 3$ and $b = p (\tfrac{2}{3} \omega ')$ then 
$$h_2 = 120 b^2 - 9 g_2$$ 
and 
$$h_3 = 280 b^3 - 42 b g_2 - 27 g_3.$$ 
\end{theorem}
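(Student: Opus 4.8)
The plan is to mimic the proof of Theorem \ref{n=2} by starting from the analogous additive identity that expresses the transformed function $q$ in terms of translates of $p$. For the cubic case, the natural candidate is
$$q(z) = p(z) + p\bigl(z + \tfrac{1}{3}\omega'\bigr) + p\bigl(z + \tfrac{2}{3}\omega'\bigr) - c$$
for a suitable constant $c$ chosen so that $q$ has the correct principal part (a double pole with leading coefficient $1$) at the origin and no constant term in its Laurent expansion there. First I would verify that the right-hand side is an even elliptic function with periods $2\omega$ and $\tfrac{2}{3}\omega'$: periodicity in $\tfrac{2}{3}\omega'$ follows because translating by $\tfrac{1}{3}\omega'$ merely permutes the three summands cyclically (using that $p$ has period $2\omega'$), and periodicity in $2\omega$ is inherited from $p$. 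The double poles at $z \equiv 0$, $z \equiv -\tfrac{1}{3}\omega'$, $z \equiv -\tfrac{2}{3}\omega'$ modulo the lattice $\langle 2\omega, 2\omega'\rangle$ collapse to a single double pole modulo the finer lattice $\langle 2\omega, \tfrac{2}{3}\omega'\rangle$, which is exactly the pole structure of $\wp(\bullet;\omega,\tfrac{1}{3}\omega')$.

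Next I would pin down the constant and extract the invariants by comparing Laurent expansions at $z = 0$. Writing $p(z) = z^{-2} + \tfrac{1}{20}g_2 z^2 + \tfrac{1}{28}g_3 z^4 + \cdots$, the two translated terms $p(z+\tfrac{1}{3}\omega')$ and $p(z+\tfrac{2}{3}\omega')$ are regular at the origin and may be Taylor-expanded there. Since $p$ is even, $p(\tfrac{1}{3}\omega') = p(-\tfrac{1}{3}\omega') = p(\tfrac{2}{3}\omega' - 2\omega')$; but it is cleaner to note that $\tfrac{1}{3}\omega'$ and $\tfrac{2}{3}\omega'$ are the two values whose half-period translates appear, and the even symmetry forces the odd-order Taylor coefficients to combine favorably. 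The constant $c$ must equal $p(\tfrac{1}{3}\omega') + p(\tfrac{2}{3}\omega') = 2b$ by evenness (taking $b = p(\tfrac{2}{3}\omega')$ and observing $p(\tfrac{1}{3}\omega') = p(\tfrac{2}{3}\omega')$ since $\tfrac{1}{3}\omega' \equiv -\tfrac{2}{3}\omega' \pmod{2\omega'}$), so that the constant term of $q$ at $0$ vanishes. Then $h_2$ is read off from the coefficient of $z^2$ and $h_3$ from the coefficient of $z^4$.

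The key computational device will be to express the even-order derivatives $p''(\tfrac{2}{3}\omega')$ and $p^{(4)}(\tfrac{2}{3}\omega')$, which enter the Taylor coefficients, back in terms of $b = p(\tfrac{2}{3}\omega')$ alone. This uses the differential equation $(p')^2 = 4p^3 - g_2 p - g_3$ together with its differentiated forms: crucially, $\tfrac{2}{3}\omega'$ is \emph{not} a half-period, so $p'(\tfrac{2}{3}\omega') \neq 0$ and $(p'(\tfrac{2}{3}\omega'))^2 = 4b^3 - g_2 b - g_3$ remains a genuine nonzero quantity that must be carried through. From $p'' = 6p^2 - \tfrac{1}{2}g_2$ we get $p''(\tfrac{2}{3}\omega') = 6b^2 - \tfrac{1}{2}g_2$, and differentiating further yields $p^{(4)}$ in terms of $b$ and $b^2$; assembling these into the Laurent coefficients and simplifying should produce $h_2 = 120b^2 - 9g_2$ and $h_3 = 280b^3 - 42bg_2 - 27g_3$.

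I expect the main obstacle to be the bookkeeping in the Laurent expansion, specifically confirming that all odd-derivative (and hence $p'(\tfrac{2}{3}\omega')$-dependent) contributions cancel when the two translates are added. The evenness of $p$ combined with the relation $\tfrac{1}{3}\omega' \equiv -\tfrac{2}{3}\omega' \pmod{2\omega'}$ is what guarantees this cancellation, and getting the signs and the reduction of the fourth derivative right is where the computation is most error-prone. Once that cancellation is secured, matching coefficients is routine algebra, so as with the quadratic case I would cite [1973] Section 65 and [1989] Section 9.8 for the full details rather than reproducing them.
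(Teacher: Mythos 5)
Your overall strategy is the paper's: write $q$ as a sum of translates of $p$ minus a normalizing constant, expand at the origin, and use $p'' = 6p^2 - \tfrac{1}{2}g_2$ and its derivatives to reduce everything to $b$. But your starting identity is wrong, and the error propagates through the justifications you give for it. The correct decomposition sums $p$ over a set of coset representatives of the coarse lattice $\langle 2\omega, 2\omega'\rangle$ inside the refined lattice $\langle 2\omega, \tfrac{2}{3}\omega'\rangle$, namely $0$, $\tfrac{2}{3}\omega'$ and $\tfrac{4}{3}\omega' \equiv -\tfrac{2}{3}\omega'$; the identity is
$$q(z) = p(z) + p\bigl(z + \tfrac{2}{3}\omega'\bigr) + p\bigl(z - \tfrac{2}{3}\omega'\bigr) - 2\,p\bigl(\tfrac{2}{3}\omega'\bigr),$$
not the one you wrote with translates by $\tfrac{1}{3}\omega'$ and $\tfrac{2}{3}\omega'$. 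Your claimed congruence $\tfrac{1}{3}\omega' \equiv -\tfrac{2}{3}\omega' \pmod{2\omega'}$ is false: the two differ by $\omega'$, which is a half-period of $p$, not a period. Consequently your candidate sum is not invariant under $z \mapsto z + \tfrac{2}{3}\omega'$ (the shift sends $p(z+\tfrac{2}{3}\omega')$ to $p(z+\omega')$, which is not one of your three summands), and it carries poles on $-\tfrac{1}{3}\omega' + \langle 2\omega, 2\omega'\rangle$, which do not lie in the lattice of $q$. For the same reason your identification $p(\tfrac{1}{3}\omega') = p(\tfrac{2}{3}\omega')$, used to get $c = 2b$, fails; note that $\tfrac{1}{3}\omega'$ is a \emph{half}-period of $q$, so $p(\tfrac{1}{3}\omega')$ is in general a different quantity from $b$.

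Once the translates are corrected to $\pm\tfrac{2}{3}\omega'$, everything you describe goes through cleanly and agrees with the paper's proof (which defers the computation to Du Val, Section 68, and Lawden, Chapter 9): evenness of $p$ gives $p(-\tfrac{2}{3}\omega') = p(\tfrac{2}{3}\omega') = b$, hence the constant $2b$, and the pairing of $+\tfrac{2}{3}\omega'$ with $-\tfrac{2}{3}\omega'$ is exactly what cancels the odd-order Taylor coefficients (including the nonzero $p'(\tfrac{2}{3}\omega')$, which you rightly flag as surviving until that cancellation and as re-entering through $p^{(4)} = 120p^3 - 18g_2 p - 12g_3$ via $(p')^2 = 4p^3 - g_2 p - g_3$). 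Matching the $z^2$ and $z^4$ coefficients then yields $h_2 = 120b^2 - 9g_2$ and $h_3 = 280b^3 - 42bg_2 - 27g_3$ as required.
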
 

\begin{proof} 
This proceeds from an inspection of the identity 
$$q(z) = p(z) + p(z + \tfrac{2}{3} \omega ') + p(z - \tfrac{2}{3} \omega ') - 2 p(\tfrac{2}{3} \omega ').$$ 
Details of the derivation may be found in Section 68 of [1973]; see also Exercises 8 and 9 of [1989] Chapter 9. 
\end{proof} 

\medbreak 

\section*{Signature three}

\medbreak 

We begin by briefly reviewing the origin of the elliptic function $\ddd$. For further details, we refer the reader to [2004]. 

\medbreak 

Fix $\kappa \in (0, 1)$ as modulus and $\lambda = \sqrt{1 - \kappa^2}$ as complementary modulus. Write $\phi: \R \to \R$ for the inverse to the strictly increasing surjective function 
$$\R \to \R : T \mapsto \int_0^T F(\tfrac{1}{3}, \tfrac{2}{3} ; \tfrac{1}{2} ; \kappa^2 \sin^2 t) \, {\rm d}t$$ 
and write 
$$K = \int_0^{\frac{1}{2} \pi} F(\tfrac{1}{3}, \tfrac{2}{3} ; \tfrac{1}{2} ; \kappa^2 \sin^2 t) \, {\rm d} t = \tfrac{1}{2} \pi  F(\tfrac{1}{3}, \tfrac{2}{3} ; 1 ; \kappa^2).$$
Elementary calculations show that $\phi$ satisfies 
$$\phi(u + 2 K) = \phi (u) + \pi$$ 
whence its derivative $\phi \,': \R \to \R$ has (least positive) period $2 K$. We shall write $\delta = \phi \,'$ for this derivative, writing $\delta_{\kappa}$ when we wish to draw attention to the modulus $\kappa$. 

\medbreak 

The function $\delta$ satisfies the initial condition $\delta(0) = 1$ because the function inverse to $\phi$ plainly has derivative $1$ at the origin; with rather more work, it may be shown that $\delta$ satisfies the differential equation 
$$9 (\delta')^2 = 4 (1 - \delta) (\delta^3 + 3 \delta^2 - 4 \lambda^2).$$ 
As the right-hand side of this complex differential equation is a quartic with simple zeros, its solutions are elliptic functions; the specific solution with $\delta(0) = 1$ is singled out as follows. 

\medbreak 

\begin{theorem} \label{dn3}
The function $\delta_{\kappa} = \phi \,'$ satisfies  
$$(1 - \delta_{\kappa}) (\tfrac{1}{3} + p_{\kappa}) = \tfrac{4}{9} \kappa^2$$ 
where $p_{\kappa} = \wp(\bullet; g_2, g_3)$ is the Weierstrass function with invariants 
$$g_2 = \tfrac{4}{27} (9 - 8 \kappa^2) = \tfrac{4}{27} (8 \lambda^2 + 1)$$ 
and 
$$g_3 = \tfrac{8}{729} (27 - 36 \kappa^2 + 8 \kappa^4) = \tfrac{8}{729} (8 \lambda^4 + 20 \lambda^2 - 1).$$ 
\end{theorem}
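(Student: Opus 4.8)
The plan is to read the displayed relation as a \emph{definition} of a candidate function and then verify that this candidate is the Weierstrass function with the stated invariants. Solving the claimed identity for the Weierstrass term suggests setting
$$p := \frac{4\kappa^2}{9(1 - \delta_\kappa)} - \tfrac13, \qquad v := \tfrac13 + p = \frac{4\kappa^2}{9(1-\delta_\kappa)},$$
so that $(1-\delta_\kappa)(\tfrac13 + p) = \tfrac49\kappa^2$ holds automatically; the whole theorem then reduces to proving $p = \wp(\bullet; g_2, g_3)$. The conceptual reason this substitution is the correct one is that $\delta = 1$ is a root of the quartic $4(1-\delta)(\delta^3 + 3\delta^2 - 4\lambda^2)$ governing $\delta_\kappa$: the map $\delta \mapsto (1-\delta)^{-1}$ sends that root to infinity and so depresses the quartic to a cubic, after which the affine normalization built into $v$ brings it to Weierstrass form.

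First I would pass to the variable $u := 1 - \delta_\kappa$. Expanding $\delta^3 + 3\delta^2 - 4\lambda^2$ in powers of $u$ and using $1 - \lambda^2 = \kappa^2$, the equation $9(\delta_\kappa')^2 = 4(1-\delta_\kappa)(\delta_\kappa^3 + 3\delta_\kappa^2 - 4\lambda^2)$ takes the form $9(u')^2 = 16\kappa^2 u - 36u^2 + 24u^3 - 4u^4$. Substituting $u = \tfrac{4\kappa^2}{9v}$, so that $u' = -\tfrac{4\kappa^2}{9v^2}\,p'$, and clearing denominators, the negative powers of $v$ combine with the $v^4$ coming from $(u')^2$ to leave a genuine cubic; a direct calculation gives
$$(p')^2 = 4v^3 - 4v^2 + \tfrac{32}{27}\kappa^2 v - \tfrac{64}{729}\kappa^4, \qquad v = p + \tfrac13.$$
Re-expanding in powers of $p$, the $p^2$ terms must cancel — the crucial consistency check that the substitution is correctly normalized — and the surviving coefficients must reproduce exactly $(p')^2 = 4p^3 - g_2 p - g_3$ with $g_2 = \tfrac{4}{27}(9 - 8\kappa^2)$ and $g_3 = \tfrac{8}{729}(27 - 36\kappa^2 + 8\kappa^4)$. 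I expect this reduction — confirming the vanishing of the quadratic term and matching both invariants — to be the main point of the argument, even though it is entirely mechanical once the substitution is fixed.

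It then remains to identify $p$ with $\wp(\bullet; g_2, g_3)$ rather than with a translate $\wp(\bullet - z_0; g_2, g_3)$. Since $\delta_\kappa$ is elliptic, $p$ is a non-constant meromorphic solution of the Weierstrass equation and hence equals $\wp(\bullet - z_0; g_2, g_3)$ for some $z_0$; to pin down $z_0 \equiv 0$ I would examine the origin. The initial condition $\delta_\kappa(0) = 1$ gives $u(0) = 0$, so $p$ has a pole there; the differential equation forces $\delta_\kappa'(0) = 0$; and differentiating $9(\delta_\kappa')^2 = G(\delta_\kappa)$ (with $G$ the quartic on the right) yields $18\,\delta_\kappa'' = G'(\delta_\kappa)$ off the zeros of $\delta_\kappa'$, hence everywhere, whence $\delta_\kappa''(0) = \tfrac{1}{18}G'(1) = -\tfrac{8}{9}\kappa^2$. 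Consequently $1 - \delta_\kappa = \tfrac{4}{9}\kappa^2 z^2 + \cdots$ near $0$, so $p(z) = z^{-2} + \cdots$ with leading coefficient exactly $1$; matching this principal part against that of $\wp$, and using that $p$ is even (because $\delta_\kappa = \phi\,'$ is even, $\phi$ being odd), forces $z_0 \equiv 0$ modulo the period lattice. This identifies $p$ with $\wp(\bullet; g_2, g_3)$ and establishes the stated relation.
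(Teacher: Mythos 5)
Your argument is correct and is essentially the proof that the paper delegates to Shen [2004] and to Section 20.6 of Whittaker--Watson [1927]: sending the root $\delta = 1$ of the quartic to infinity, normalizing the resulting cubic to Weierstrass form, and fixing the translate by the behaviour at the origin is precisely the classical reduction cited there. Your intermediate computations (the depressed cubic in $v$, the cancellation of the $p^2$ term, the two invariants, and $\delta_{\kappa}''(0) = -\tfrac{8}{9}\kappa^2$ forcing the principal part $z^{-2}$) all check out.
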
 

\begin{proof} 
See [2004]; the proof is effected by reference to Section 20.6 of the classic [1927]. 
\end{proof} 

\medbreak 

Thus, $\delta$ is the restriction to $\R$ of an elliptic function; this elliptic extension of $\delta$ is the function $\ddd$ of Shen.

\medbreak 

The elliptic function $\ddd = \delta_{\kappa}$ and the Weierstrass function $p_{\kappa}$ are plainly coperiodic. We shall denote by $(2 \omega_{\kappa}, 2 \omega_{\kappa} ')$ their fundamental pair of periods for which $\omega_{\kappa}$ and $- \ii \, \omega_{\kappa} '$ are strictly positive; we may then also write $p_{\kappa} = \wp(\bullet; \omega_{\kappa}, \omega_{\kappa} ')$. We have already identified the real half-period $\omega_{\kappa}$: it is precisely $K$ as displayed above; that is, 
$$\omega_{\kappa} = \tfrac{1}{2} \pi  F(\tfrac{1}{3}, \tfrac{2}{3} ; 1 ; \kappa^2).$$
We now proceed to evaluate the imaginary half-period $\omega_{\kappa}'$. A customary method for performing such an evaluation involves the calculation of an integral. We propose to depart from this custom: instead, we shall make use of a modular transformation of the sort that is appropriate to Weierstrass functions. 

\medbreak 

Explicitly, alongside the Weierstrass function $p_{\kappa} = \wp(\bullet; \omega_{\kappa}, \omega_{\kappa} ')$ we introduce the Weierstrass function 
$$q_{\kappa} = \wp (\bullet; \omega_{\kappa}, \tfrac{1}{3} \omega_{\kappa} ')$$ 
that results upon division of its imaginary period by three. 

\medbreak 

\begin{theorem} \label{qdn3}
The Weierstrass functions $p$ and $q$ are related by 
$$q_{\kappa} (z) = - 3 \, p_{\lambda} (\sqrt3 \ii \, z).$$ 
\end{theorem}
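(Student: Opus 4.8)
The plan is to note that both sides of the asserted identity are Weierstrass $\wp$ functions, so that it suffices to check that they carry the same pair of invariants; equality of the functions then follows because a Weierstrass function is determined by its invariants. First I would dispose of the right-hand side by homogeneity. Writing $t = \sqrt3\,\ii$, so that $t^2 = -3$, $t^4 = 9$ and $t^6 = -27$, the scaling relation $\wp(t z; G_2, G_3) = t^{-2}\,\wp(z; t^4 G_2, t^6 G_3)$ converts $-3\,p_\lambda(\sqrt3\,\ii\,z)$ into the Weierstrass function $\wp(z; \widetilde g_2, \widetilde g_3)$ with $\widetilde g_2 = 9\,g_2(\lambda)$ and $\widetilde g_3 = -27\,g_3(\lambda)$. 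Feeding in the invariants of $p_\lambda$ — which are precisely those of Theorem \ref{dn3} with $\kappa$ and $\lambda$ interchanged — I would record the explicit values $\widetilde g_2 = \tfrac43(8\kappa^2 + 1)$ and $\widetilde g_3 = -\tfrac{8}{27}(8\kappa^4 + 20\kappa^2 - 1)$.

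Next I would turn to the left-hand side. By construction $q_\kappa = \wp(\bullet; \omega_\kappa, \tfrac13\omega_\kappa')$ arises from $p_\kappa$ by the cubic modular transformation of Theorem \ref{n=3}, so its invariants are $h_2 = 120 b^2 - 9 g_2$ and $h_3 = 280 b^3 - 42 b g_2 - 27 g_3$, where $b = p_\kappa(\tfrac23\omega_\kappa')$ and $g_2, g_3$ are the invariants of Theorem \ref{dn3}. The entire proof therefore hinges on the single special value $b$, and I claim that $b = -\tfrac13$. Granting this, a direct substitution (routine algebra) collapses $h_2$ to $\tfrac43(8\kappa^2+1) = \widetilde g_2$ and $h_3$ to $-\tfrac{8}{27}(8\kappa^4 + 20\kappa^2 - 1) = \widetilde g_3$; the two invariants thus agree and the argument is complete.

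The crux, and the step I expect to be the main obstacle, is the evaluation $p_\kappa(\tfrac23\omega_\kappa') = -\tfrac13$. My route here rests on the defining relation of Theorem \ref{dn3} in the rearranged form $p_\kappa = -\tfrac13 + \tfrac{4\kappa^2/9}{1 - \delta_\kappa}$, which shows that $p_\kappa$ takes the value $-\tfrac13$ exactly at the poles of $\delta_\kappa$. I would first confirm, by substituting into the $3$-division (quartic) polynomial of $p_\kappa$, that $-\tfrac13$ really is one of the four values of $p_\kappa$ at the nonzero $3$-torsion points, so that the poles of $\delta_\kappa$ sit at such points. It then remains to single out $\tfrac23\omega_\kappa'$ among them, and for this I would use reality. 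With $\omega_\kappa$ real and $\omega_\kappa'$ purely imaginary the lattice is rectangular, so of the four $3$-torsion values only $p_\kappa(\tfrac23\omega_\kappa)$ and $p_\kappa(\tfrac23\omega_\kappa')$ are real, the remaining two forming a genuinely complex-conjugate pair and hence unequal to the real number $-\tfrac13$. Finally, since $\delta_\kappa = \phi\,'$ is finite and positive throughout $\R$, the value $-\tfrac13$ is never attained on the real axis; as $p_\kappa > e_1 = p_\kappa(\omega_\kappa)$ there, this forces $-\tfrac13 < e_1 < p_\kappa(\tfrac23\omega_\kappa)$, which excludes the real-axis candidate and leaves $b = p_\kappa(\tfrac23\omega_\kappa') = -\tfrac13$, as required.
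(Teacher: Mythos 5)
Your argument is correct, and its outer shell coincides with the paper's: both run the cubic transformation of Theorem \ref{n=3} on $q_{\kappa}$, feed in the invariants from Theorem \ref{dn3}, observe that the resulting $h_2$ and $h_3$ are $(\sqrt3 \ii)^4$ and $(\sqrt3 \ii)^6$ times the invariants of $p_{\lambda}$, and finish with the homogeneity relation. Where you genuinely diverge is at the step you rightly identify as the crux, the evaluation $b = p_{\kappa}(\tfrac{2}{3}\omega_{\kappa}') = -\tfrac{1}{3}$: the paper does not prove this but imports it from Section 5 of Shen [2004], whereas you supply a self-contained derivation via the $3$-division quartic together with a reality argument on the rectangular lattice. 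That derivation holds up: $-\tfrac{1}{3}$ is indeed a root of $48x^4 - 24 g_2 x^2 - 48 g_3 x - g_2^2$ for the invariants of Theorem \ref{dn3} (the $\kappa^4$, $\kappa^2$ and constant terms all cancel), and your exclusion of the real-axis candidate via the finiteness of $\delta_{\kappa} = \phi\,'$ on $\R$ — forcing $-\tfrac{1}{3} < e_1 \le p_{\kappa}(\tfrac{2}{3}\omega_{\kappa})$ — is sound. The one point you assert rather than prove is that the conjugate pair $p_{\kappa}(\tfrac{2}{3}(\omega_{\kappa} \pm \omega_{\kappa}'))$ is \emph{genuinely} non-real; this needs the (standard) remark that the four $3$-torsion values are pairwise distinct, since $\wp$ separates the classes $\{\pm v\}$ modulo the lattice, so neither member of the pair can equal its own conjugate. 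With that sentence added your proof is complete, and it buys something the paper's does not: independence from [2004] at the only nontrivial input.
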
 

\begin{proof} 
Note the passage to the complementary modulus. First, apply Theorem \ref{n=3} and take into account the fact that 
$$b = p_{\kappa}(\tfrac{2}{3} \omega_{\kappa} ') = - \tfrac{1}{3};$$ 
this nontrivial fact is proved in Section 5 of [2004]. From the $\kappa$-dependent formulae for $g_2$ and $g_3$ in Theorem \ref{dn3} it follows by substitution that the invariants $h_2$ and $h_3$ of $q_{\kappa}$ are given by 
$$h_2 = 120 b^2 - 9 g_2 = \tfrac{4}{3} (1 + 8 \kappa^2)$$ 
and 
$$h_3 = 280 b^3 - 42 b g_2 - 27 g_3 = \tfrac{8}{27} (1 - 20 \kappa^2 - 8 \kappa^4).$$ 
It is now convenient to write $g_2(f)$ and $g_3 (f)$ for the quadrinvariant and cubinvariant of any Weierstrass function $f$. With this understanding, we have just established that 
$$g_2 (q_{\kappa}) = 9 \, g_2 (p_{\lambda}) = (\sqrt3 \ii)^4 g_2 (p_{\lambda})$$
and 
$$g_3 (q_{\kappa}) = - 27 \, g_3 (p_{\lambda}) = (\sqrt3 \ii)^6 g_3 (p_{\lambda})$$ 
by reference to Theorem \ref{dn3} for the complementary modulus. The homogeneity relation for Weierstrass functions carries us to the announced conclusion 
$$q_{\kappa} (z) = (\sqrt3 \ii)^2 \, p_{\lambda} (\sqrt3 \ii \, z).$$
\end{proof} 

\medbreak 

This relationship between Weierstrass functions entails a relationship between their half-periods. Explicitly, $q_{\kappa}$ has fundamental half-periods $\omega_{\kappa}$ and $\tfrac{1}{3} \omega_{\kappa} '$ while $p_{\lambda}$ has fundamental half-periods $\omega_{\lambda}$ and $\omega_{\lambda}'$. Accordingly, we deduce the relationship 
$$\omega_{\kappa} ' = \sqrt3 \, \ii \, \omega_{\lambda}.$$ 

\medbreak 

\begin{theorem} \label{per3}
The fundamental half-periods of $\ddd = \delta_{\kappa}$ and $p_{\kappa}$ are given by 
$$\omega_{\kappa} = \tfrac{1}{2} \pi \, F(\tfrac{1}{3}, \tfrac{2}{3} ; 1 ; \kappa^2)$$ 
and 
$$\omega_{\kappa}' = \ii \tfrac{\sqrt3}{2} \pi \, F(\tfrac{1}{3}, \tfrac{2}{3} ; 1 ; 1 - \kappa^2).$$ 
\end{theorem}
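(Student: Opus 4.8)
The plan is to assemble the two half-period formulae from facts already in hand, with essentially no new computation required. The formula for $\omega_{\kappa}$ is immediate: the real half-period was identified earlier in this section as precisely the quantity $K$, and the standard integral identity from the introduction gives $K = \tfrac{1}{2}\pi F(\tfrac{1}{3}, \tfrac{2}{3}; 1; \kappa^2)$. So the first displayed formula requires no further argument.

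For the imaginary half-period, I would invoke the relationship $\omega_{\kappa}' = \sqrt3\, \ii\, \omega_{\lambda}$ that was deduced immediately after Theorem \ref{qdn3} by comparing the half-periods of $q_{\kappa}$ and $p_{\lambda}$. The key observation is that $\omega_{\lambda}$ denotes the real half-period of the Weierstrass function $p_{\lambda}$ attached to the complementary modulus $\lambda = \sqrt{1 - \kappa^2}$; since $\kappa \in (0,1)$ forces $\lambda \in (0,1)$, the entire construction of this section applies verbatim with $\lambda$ in place of $\kappa$. In particular, the formula for the real half-period holds for the modulus $\lambda$, yielding
$$\omega_{\lambda} = \tfrac{1}{2}\pi F(\tfrac{1}{3}, \tfrac{2}{3}; 1; \lambda^2) = \tfrac{1}{2}\pi F(\tfrac{1}{3}, \tfrac{2}{3}; 1; 1 - \kappa^2).$$

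Substituting this expression into $\omega_{\kappa}' = \sqrt3\, \ii\, \omega_{\lambda}$ then gives
$$\omega_{\kappa}' = \ii \tfrac{\sqrt3}{2} \pi\, F(\tfrac{1}{3}, \tfrac{2}{3}; 1; 1 - \kappa^2),$$
which is the second displayed formula. As a consistency check on the orientation convention, note that $- \ii\, \omega_{\kappa}' = \sqrt3\, \omega_{\lambda} > 0$, in agreement with the requirement that $- \ii\, \omega_{\kappa}'$ be strictly positive. The only point demanding care --- and it is the main, though modest, obstacle --- is the appeal to symmetry in the modulus: one must confirm that nothing in the derivation of the real half-period formula privileged $\kappa$ over an arbitrary modulus in $(0,1)$, so that the formula may legitimately be reused for $\lambda$.
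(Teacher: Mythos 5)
Your proposal is correct and follows exactly the paper's own route: the real half-period is the previously identified $K$, and the imaginary half-period is obtained by substituting the real half-period formula at the complementary modulus $\lambda$ into the relation $\omega_{\kappa}' = \sqrt3\,\ii\,\omega_{\lambda}$ deduced just before the theorem. The paper's proof is a one-line version of the same argument, so nothing further is needed.
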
 

\begin{proof} 
The real half-period has already been identified; the imaginary half-period follows at once from the relationship displayed immediately prior to the present Theorem, on account of the fact that $\lambda^2 = 1 - \kappa^2$. 
\end{proof} 

\medbreak 

Thus the shape of the period lattice is given by the period ratio
$$\frac{\omega_{\kappa}'}{\omega_{\kappa}} = \ii \sqrt3 \, \frac{F(\tfrac{1}{3}, \tfrac{2}{3} ; 1 ; 1 - \kappa^2)}{F(\tfrac{1}{3}, \tfrac{2}{3} ; 1 ; \kappa^2)}\, .$$ 

\medbreak 

\section*{Signature four} 

\medbreak 

We begin by briefly reviewing the origin of the elliptic function $\dd$. For further details, we refer the reader to [2014]. 

\medbreak 

Fix $\kappa \in (0, 1)$ as modulus and $\lambda = \sqrt{1 - \kappa^2}$ as complementary modulus. Write $\phi: \R \to \R$ for the inverse to the strictly increasing surjective function 
$$\R \to \R : T \mapsto \int_0^T F(\tfrac{1}{4}, \tfrac{3}{4} ; \tfrac{1}{2} ; \kappa^2 \sin^2 t) \, {\rm d}t$$ 
and write 
$$K = \int_0^{\frac{1}{2} \pi} F(\tfrac{1}{4}, \tfrac{3}{4} ; \tfrac{1}{2} ; \kappa^2 \sin^2 t) \, {\rm d} t = \tfrac{1}{2} \pi  F(\tfrac{1}{4}, \tfrac{3}{4} ; 1 ; \kappa^2).$$
Elementary calculations show that $\phi$ satisfies 
$$\phi(u + 2 K) = \phi (u) + \pi$$ 
whence if 
$$\psi = \arcsin (\kappa \sin \phi)$$
then the function $\dr = \cos \psi$ has (least positive) period $2 K$. When we wish to place the modulus $\kappa$ in evidence, it shall appear as a subscript. 

\medbreak

The function $\rd$ satisfies the initial condition $\rd (0) = 1$ quite plainly; less plainly, it also satisfies the differential equation 
$$(\rd ')^2 = 2 (1 - \rd) (\rd^2 - \lambda^2).$$ 
The solution to this initial value problem extends to an elliptic function that may be expressed in terms of its coperiodic Weierstrass $\wp$ function, as follows. 

\medbreak 

\begin{theorem} \label{dn2}
The function $\rd_{\kappa} = \cos \psi$ satisfies 
$$(1 - \rd_{\kappa}) (\tfrac{1}{3} + p_{\kappa}) = \tfrac{1}{2} \kappa^2$$ 
where $p_{\kappa} = \wp(\bullet; g_2, g_3)$ is the Weierstrass function with invariants 
$$g_2 = \tfrac{4}{3} - \kappa^2 = \lambda^2 + \tfrac{1}{3}$$ 
and 
$$g_3 = \tfrac{8}{27} - \tfrac{1}{3} \kappa^2 = \tfrac{1}{3} \lambda^2 - \tfrac{1}{27}.$$ 
\end{theorem}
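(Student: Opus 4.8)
The plan is to read the asserted identity backwards, taking it as the \emph{definition} of $p_\kappa$ and then verifying that the function so defined is the advertised Weierstrass function. Since ${\rd_\kappa}$ is already known to extend to an elliptic function, the prescription
$$p_\kappa := \frac{\tfrac{1}{2}\kappa^2}{1 - {\rd_\kappa}} - \tfrac{1}{3}$$
defines an elliptic function, being a rational expression in ${\rd_\kappa}$; it shares the period lattice of ${\rd_\kappa}$ and has poles exactly where ${\rd_\kappa} = 1$. It remains to show that $p_\kappa = \wp(\bullet; g_2, g_3)$ with the stated invariants.

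First I would compute $(p_\kappa')^2$ algebraically. Abbreviating $u = 1 - {\rd_\kappa}$, so that $p_\kappa + \tfrac{1}{3} = \tfrac{1}{2}\kappa^2 / u$, differentiation gives $p_\kappa' = \tfrac{1}{2}\kappa^2 \, {\rd_\kappa}' / u^2$ and hence $(p_\kappa')^2 = \tfrac{1}{4}\kappa^4 ({\rd_\kappa}')^2 / u^4$. Into this I substitute the differential equation $({\rd_\kappa}')^2 = 2(1 - {\rd_\kappa})({\rd_\kappa}^2 - \lambda^2)$, and I use ${\rd_\kappa} = 1 - u$ together with $1 - \lambda^2 = \kappa^2$ to rewrite ${\rd_\kappa}^2 - \lambda^2 = \kappa^2 - 2u + u^2$. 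Everything becomes rational in $u$; replacing $u$ by $\tfrac{1}{2}\kappa^2/(p_\kappa + \tfrac{1}{3})$ and expanding the powers of $p_\kappa + \tfrac{1}{3}$ then produces, after the factors of $\kappa^2$ cancel,
$$(p_\kappa')^2 = 4 p_\kappa^3 - (\tfrac{4}{3} - \kappa^2)\, p_\kappa - (\tfrac{8}{27} - \tfrac{1}{3}\kappa^2).$$
This is the Weierstrass differential equation with precisely the invariants $g_2 = \tfrac{4}{3} - \kappa^2$ and $g_3 = \tfrac{8}{27} - \tfrac{1}{3}\kappa^2$ of the statement. This manipulation is the bulk of the labour, but it is wholly mechanical.

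The remaining step, and the only genuinely delicate one, is to identify the elliptic solution $p_\kappa$ with the Weierstrass function itself rather than with a translate of it. For this I would examine the behaviour at the origin. Since ${\rd_\kappa}(0) = 1$ we have $u(0) = 0$, so $p_\kappa$ has a pole there; and the differential equation for ${\rd_\kappa}$ forces ${\rd_\kappa}'(0) = 0$ and $u \sim \tfrac{1}{2}\kappa^2 z^2$ as $z \to 0$, whence $p_\kappa \sim z^{-2}$. A short computation gives the discriminant $g_2^3 - 27 g_3^2 = \kappa^2 \lambda^2$, which is positive for $\kappa \in (0,1)$; hence the cubic $4t^3 - g_2 t - g_3$ has distinct roots and every nonconstant meromorphic solution of $(p')^2 = 4p^3 - g_2 p - g_3$ is a translate $\wp(\bullet - z_0; g_2, g_3)$ of the Weierstrass function carrying those invariants. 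The pole of $p_\kappa$ at the origin pins the translate down: it forces $z_0$ to be a lattice point, and therefore $p_\kappa = \wp(\bullet; g_2, g_3)$. Rearranging this equality reproduces the relation $(1 - {\rd_\kappa})(\tfrac{1}{3} + p_\kappa) = \tfrac{1}{2}\kappa^2$, which completes the argument. The one point that repays care is precisely this final identification: the algebra of the preceding paragraph only certifies that $p_\kappa$ solves the Weierstrass equation, and it is the pole analysis at the origin, together with the standard uniqueness theorem for that equation, that upgrades ``solves the equation'' to ``equals $\wp$''.
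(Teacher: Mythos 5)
Your proposal is correct, and it is essentially the argument that the paper outsources rather than prints: the paper's ``proof'' is a citation of [2014], which in turn invokes the inversion theorem of Section 20.6 of Whittaker--Watson, and your computation --- set $p_\kappa = \tfrac{1}{2}\kappa^2/(1-{\rm d}_\kappa) - \tfrac{1}{3}$, reduce the ODE for ${\rm d}_\kappa$ to $(p_\kappa')^2 = 4p_\kappa^3 - g_2 p_\kappa - g_3$, then pin down the translate of $\wp$ by the double pole at the origin --- is exactly that standard argument written out in full (and the algebra does check: $4P^3 - 4P^2 + \kappa^2 P$ with $P = p_\kappa + \tfrac{1}{3}$ expands to the stated cubic). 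Two small repairs: the discriminant is $g_2^3 - 27g_3^2 = \kappa^4\lambda^2$, not $\kappa^2\lambda^2$, which is harmless since all you need is that it is nonzero for $\kappa\in(0,1)$; and your opening appeal to ``${\rm d}_\kappa$ is already known to extend to an elliptic function'' is, in the paper's logical order, precisely what this theorem is meant to deliver, but the circularity is removed by running your algebra on a real interval where ${\rm d}_\kappa$ satisfies its differential equation and letting the resulting identification with a translate of $\wp$ \emph{provide} the elliptic extension.
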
 

\begin{proof} 
See [2014]; again, the proof refers to Section 20.6 of [1927]. 
\end{proof} 

\medbreak 

The function $\dd$ of Shen is the elliptic extension of $\rd$ guaranteed by this Theorem. 

\medbreak 

We write $(2 \omega_{\kappa}, 2 \omega_{\kappa} ')$ for the fundamental pair of periods for $\dd = \rd_{\kappa}$ and $p_{\kappa}$ such that $\omega_{\kappa}$ and $- \ii \omega_{\kappa} '$ are strictly positive. The real half-period $\omega_{\kappa}$ has already been identified in hypergeometric terms; the imaginary half-period $\omega_{\kappa} '$ will now be similarly  identified by means of an appropriate modular transformation. 

\medbreak 

Thus, as a companion to $p_{\kappa} = \wp (\bullet ; \omega_{\kappa}, \omega_{\kappa} ')$ we introduce the Weierstrass function 
$$q_{\kappa} = \wp (\bullet; \omega_{\kappa}, \tfrac{1}{2} \omega_{\kappa} ')$$ 
that results upon halving its imaginary period. 

\medbreak 

\begin{theorem} \label{qdn2}
The Weierstrass functions $p$ and $q$ are related by 
$$q_{\kappa} (z) = - 2 \, p_{\lambda} (\sqrt2 \ii \, z).$$ 
\end{theorem}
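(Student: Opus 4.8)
The plan is to follow the proof of Theorem~\ref{qdn3} step for step, with the cubic transformation replaced by the quadratic one. First I would invoke Theorem~\ref{n=2} with $n = 2$, whose sole nontrivial input is the value
$$b = p_{\kappa}(\omega_{\kappa} ') = - \tfrac{1}{3}.$$
This is the signature-four counterpart of the fact $p_{\kappa}(\tfrac{2}{3}\omega_{\kappa}') = -\tfrac{1}{3}$ exploited in the previous section. I would either quote it from [2014] or establish it directly from Theorem~\ref{dn2}, as discussed in the last paragraph below.

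Granting $b = -\tfrac{1}{3}$, the remainder is mechanical. I would substitute the $\kappa$-dependent expressions for $g_2$ and $g_3$ from Theorem~\ref{dn2} into the quadratic-transformation formulae $h_2 = 60 b^2 - 4 g_2$ and $h_3 = 56 b^3 + 8 g_3$ of Theorem~\ref{n=2} and simplify. The routine arithmetic yields
$$h_2 = \tfrac{4}{3} + 4 \kappa^2 = \tfrac{16}{3} - 4 \lambda^2, \qquad h_3 = \tfrac{8}{27} - \tfrac{8}{3}\kappa^2 = - \tfrac{64}{27} + \tfrac{8}{3}\lambda^2.$$
The decisive recognition step is then to read these off, via Theorem~\ref{dn2} applied to the complementary modulus $\lambda$, as
$$g_2(q_{\kappa}) = 4\, g_2(p_{\lambda}) = (\sqrt2\, \ii)^4\, g_2(p_{\lambda}), \qquad g_3(q_{\kappa}) = -8\, g_3(p_{\lambda}) = (\sqrt2\, \ii)^6\, g_3(p_{\lambda}),$$
which mirror the signature-three identities with $\sqrt3\,\ii$ replaced by $\sqrt2\,\ii$. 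An appeal to the homogeneity relation for Weierstrass functions, with scaling factor $\sqrt2\,\ii$ and hence $(\sqrt2\,\ii)^2 = -2$, then delivers the announced conclusion $q_{\kappa}(z) = (\sqrt2\,\ii)^2\, p_{\lambda}(\sqrt2\,\ii\, z) = -2\, p_{\lambda}(\sqrt2\,\ii\, z)$.

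The one genuinely substantive point, and the step I expect to be the main obstacle, is the justification of $b = -\tfrac{1}{3}$, since everything downstream is purely computational once it is in hand. A self-contained argument is available: a direct check shows that $-\tfrac{1}{3}$ is a root of the period cubic $4 t^3 - g_2 t - g_3$ attached to the signature-four invariants, so it is one of the three half-period values of $p_{\kappa}$. To pin it to the imaginary half-period $\omega_{\kappa}'$, I would note that $-\tfrac{1}{3}$ is the \emph{smallest} of the three roots and that the relation of Theorem~\ref{dn2} sends $p_{\kappa} = -\tfrac{1}{3}$ to the pole of $\rd_{\kappa}$ (the factor $\tfrac{1}{3} + p_{\kappa}$ there vanishes while $\tfrac{1}{2}\kappa^2 \neq 0$); the normalization fixing $\omega_{\kappa}$ real and $-\ii\,\omega_{\kappa}'$ positive then identifies $p_{\kappa}(\omega_{\kappa}')$ as this smallest root $-\tfrac{1}{3}$. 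The evaluation of the other two roots as $\tfrac{1}{6} \pm \tfrac{\lambda}{2}$, obtained by setting $\rd_{\kappa} = \pm \lambda$, furnishes a useful consistency check, since their sum with $-\tfrac{1}{3}$ vanishes as it must for a Weierstrass function.
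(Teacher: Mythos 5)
Your proposal is correct and follows essentially the same route as the paper: apply Theorem~\ref{n=2} with $b = p_{\kappa}(\omega_{\kappa}') = -\tfrac{1}{3}$ (which the paper simply quotes from Section 4 of [2014]), compute $h_2 = \tfrac{4}{3} + 4\kappa^2$ and $h_3 = \tfrac{8}{27} - \tfrac{8}{3}\kappa^2$, recognize these as $(\sqrt2\,\ii)^4 g_2(p_{\lambda})$ and $(\sqrt2\,\ii)^6 g_3(p_{\lambda})$ via Theorem~\ref{dn2} at the complementary modulus, and conclude by homogeneity. Your supplementary self-contained justification of $b = -\tfrac{1}{3}$, identifying it as the smallest root of the period cubic alongside $\tfrac{1}{6} \pm \tfrac{\lambda}{2}$, is sound but is extra work the paper avoids by citing [2014].
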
 

\begin{proof} 
Again, note the involvement of the complementary modulus. The proof follows the line of argument for Theorem \ref{qdn3}. It is shown in Section 4 of [2014] that 
$$p_{\kappa} (\omega_{\kappa} ') = - \tfrac{1}{3}.$$ 
Accordingly, by application of Theorem \ref{n=2} along with reference to the $\kappa$-dependent formulae for $g_2$ and $g_3$ in Theorem \ref{dn2}, the invariants $h_2$ and $h_3$ of $q_{\kappa}$ are found to be  
$$h_2 = 4(\tfrac{1}{3} + \kappa^2)$$
(which is $4 = (\sqrt2 \ii)^4$ times the quadrinvariant of $p_{\lambda}$) and 
$$h_3 = 8(\tfrac{1}{27} - \tfrac{1}{3} \kappa^2)$$
(which is $- 8 = (\sqrt2 \ii)^6$ times the cubinvariant of $p_{\lambda}$). Finally, the Weierstrassian homogeneity relation serves to conclude the proof. 
\end{proof} 

\medbreak 

As was the case in signature three, a relation between real and imaginary half-periods for complementary moduli follows here: thus 
$$\omega_{\kappa} ' = \sqrt2 \ii \, \omega_{\lambda}.$$ 

\medbreak 

\begin{theorem} \label{per2}
The fundamental half-periods of $\dd = \rd_{\kappa}$ and $p_{\kappa}$ are given by 
$$\omega_{\kappa} = \tfrac{1}{2} \pi \, F(\tfrac{1}{4}, \tfrac{3}{4} ; 1 ; \kappa^2)$$ 
and 
$$\omega_{\kappa}' = \ii \tfrac{\sqrt2}{2} \pi \, F(\tfrac{1}{4}, \tfrac{3}{4} ; 1 ; 1 - \kappa^2).$$ 
\end{theorem}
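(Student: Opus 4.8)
The plan is to follow the argument of Theorem \ref{per3} essentially verbatim, transposing it from signature three to signature four; all the substantive work has been discharged beforehand, so the proof reduces to assembling three ingredients in order.

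First I would pin down the real half-period. The function $\rd_{\kappa} = \cos\psi$ was seen to have least positive period $2K$, and since $\rd_{\kappa}$ and $p_{\kappa}$ are coperiodic this forces $\omega_{\kappa} = K$. But $K$ has already been cast in hypergeometric form via the standard integral identity
$$\int_0^{\frac12\pi} F(\tfrac14,\tfrac34;\tfrac12;\kappa^2\sin^2 t)\,{\rm d}t = \tfrac12\pi\, F(\tfrac14,\tfrac34;1;\kappa^2),$$
so the first assertion $\omega_{\kappa} = \tfrac12\pi\,F(\tfrac14,\tfrac34;1;\kappa^2)$ is immediate.

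Next I would apply this same formula to the complementary modulus $\lambda$ in place of $\kappa$, obtaining $\omega_{\lambda} = \tfrac12\pi\,F(\tfrac14,\tfrac34;1;\lambda^2)$ for the real half-period of $p_{\lambda}$. Then I would combine this with the period relation $\omega_{\kappa}' = \sqrt2\,\ii\,\omega_{\lambda}$ displayed immediately before the statement — itself a consequence of Theorem \ref{qdn2} together with the Weierstrassian homogeneity relation — and substitute $\lambda^2 = 1 - \kappa^2$ to reach
$$\omega_{\kappa}' = \sqrt2\,\ii\cdot\tfrac12\pi\,F(\tfrac14,\tfrac34;1;1-\kappa^2) = \ii\tfrac{\sqrt2}{2}\pi\,F(\tfrac14,\tfrac34;1;1-\kappa^2),$$
which is the second assertion.

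No genuine computational obstacle arises, since Theorem \ref{qdn2} has already performed the heavy lifting of the modular transformation. The only points demanding a little care are matters of bookkeeping rather than analysis: one must confirm that $2K$ is the fundamental (least) real period and not a proper multiple of it, so that the identification $\omega_{\kappa} = K$ is legitimate, and one must verify that the sign in $\omega_{\kappa}' = \sqrt2\,\ii\,\omega_{\lambda}$ is the one compatible with the normalization $-\ii\,\omega_{\kappa}' > 0$ fixed for the fundamental pair. Both are settled by the conventions already in force, and the theorem follows at once.
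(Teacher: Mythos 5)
Your proposal is correct and follows the paper's own route exactly: the real half-period is the previously identified $K$ in hypergeometric form, and the imaginary half-period is read off from the relation $\omega_{\kappa}' = \sqrt2\,\ii\,\omega_{\lambda}$ supplied by Theorem \ref{qdn2} together with $\lambda^2 = 1 - \kappa^2$. Your added remarks on checking fundamentality of $2K$ and the sign convention are sensible bookkeeping but introduce nothing beyond what the paper's conventions already settle.
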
 

\begin{proof} 
As for signature three, the relationship between $\omega_{\kappa}'$ and $\omega_{\lambda}$ allows us to derive the imaginary half-period at once from the previously identified real half-period. 
\end{proof} 

\medbreak 

The shape of the period lattice is thus given by the period ratio
$$\frac{\omega_{\kappa}'}{\omega_{\kappa}} = \ii \sqrt2 \, \frac{F(\tfrac{1}{4}, \tfrac{3}{4} ; 1 ; 1 - \kappa^2)}{F(\tfrac{1}{4}, \tfrac{3}{4} ; 1 ; \kappa^2)}\, .$$

\bigbreak

\begin{center} 
{\small R}{\footnotesize EFERENCES}
\end{center} 
\medbreak

[1927] E.T. Whittaker and G.N. Watson, {\it A Course of Modern Analysis}, Fourth Edition, Cambridge University Press. 

\medbreak 

[1973] P. Du Val, {\it Elliptic Functions and Elliptic Curves}, L.M.S. Lecture Note Series {\bf 9}, Cambridge University Press. 

\medbreak 

[1989] D.F. Lawden, {\it Elliptic Functions and Applications}, Applied Mathematical Sciences {\bf 80}, Springer-Verlag. 

\medbreak 

[2004] Li-Chien Shen, {\it On the theory of elliptic functions based on $_2F_1(\frac{1}{3}, \frac{2}{3} ; \frac{1}{2} ; z)$}, Transactions of the American Mathematical Society {\bf 357} 2043-2058. 

\medbreak 

[2014] Li-Chien Shen, {\it On a theory of elliptic functions based on the incomplete integral of the hypergeometric function $_2 F_1 (\frac{1}{4}, \frac{3}{4} ; \frac{1}{2} ; z)$}, Ramanujan Journal {\bf 34} 209-225. 

\medbreak

\end{document}